\documentclass[11pt]{amsart}

\usepackage[a4paper,hmargin=3cm,vmargin=3cm]{geometry}
\usepackage{amsfonts,amssymb,amscd,amstext}
\usepackage{graphicx}
\usepackage[dvips]{epsfig}
\usepackage[ansinew]{inputenc}

\usepackage{fancyhdr}
\pagestyle{fancy}
\fancyhf{}

\usepackage{times}

\usepackage{enumerate}
\usepackage{titlesec}
\usepackage{mathrsfs}
\usepackage{stmaryrd}


\pretolerance=2000
\tolerance=3000

\def\R{\mathbb{R}}

\def\Z{\mathbb{Z}}

\def\D{\mathbb{D}}

\newcommand{\ben}{\begin{enumerate}}
\newcommand{\bit}{\begin{itemize}}
\newcommand{\een}{\end{enumerate}}
\newcommand{\eit}{\end{itemize}}

\newcommand{\ed}{\end{document}}

\def\cU{\mathcal{U}}

\def\cH{\mathcal{H}}

\def\cM{\mathcal{M}}

\let\8=\infty \let\0=\emptyset  
\let\hat=\widehat

\let\landa=\lambda
\let\alfa=\alpha

\let\parc=\partial

\def\ep{\varepsilon}

\def\landa{\lambda}

\def\flecha{\rightarrow}

\def\cte.{\mathop{\rm cte.}\nolimits}

\def\R{\mathbb{R}}
\def\Z{\mathbb{Z}}

\def\D{\mathbb{D}}

\def\S{\mathbb{S}}


\headheight=15.03pt
\headsep 0.5cm
\topmargin 0.5cm
\textheight = 49\baselineskip
\textwidth 15cm
\oddsidemargin 0.5cm
\evensidemargin 0.5cm

\setlength{\parskip}{0.5em}

\titleformat{\section}
{\filcenter\bfseries\large} {\thesection{.}}{0.2cm}{}
\titleformat{\subsection}[runin]
{\bfseries} {\thesubsection{.}}{0.15cm}{}[.]
\titleformat{\subsubsection}[runin]
{\em}{\thesubsubsection{.}}{0.15cm}{}[.]

\usepackage[up,bf]{caption}

\newtheorem{theorem}{Theorem}[section]

\newtheorem{proposition}[theorem]{Proposition}

\newtheorem{corollary}[theorem]{Corollary}

\newtheorem{example}[theorem]{Example}

\theoremstyle{definition}




\numberwithin{equation}{section}
\numberwithin{figure}{section}

\usepackage{color}


\begin{document}
\fancyhead[LO]{Serrin's overdetermined problem}
\fancyhead[RE]{José A. Gálvez, Pablo Mira}
\fancyhead[RO,LE]{\thepage}

\thispagestyle{empty}

\begin{center}
{\bf \LARGE Serrin's overdetermined problem for fully nonlinear\\[0.2cm] non-elliptic equations}
\vspace*{5mm}

\hspace{0.2cm} {\Large José A. Gálvez, Pablo Mira}
\end{center}

\footnote[0]{\vspace*{-0.4cm} \emph{Mathematics Subject Classification}: 35J25, 49Q05, 53A10}

\vspace*{7mm}

\begin{quote}
{\small
\noindent {\bf Abstract}\hspace*{0.1cm}
Let $u$ denote a solution to a rotationally invariant Hessian equation $F(D^2u)=0$ on a bounded simply connected domain $\Omega\subset \R^2$, with constant Dirichlet and Neumann data on $\parc \Omega$. In this paper we prove that if $u$ is real analytic and not identically zero, then $u$ is radial and $\Omega$ is a disk. The fully nonlinear operator $F\not\equiv 0$ is of general type, and in particular, not assumed to be elliptic. We also show that the result is sharp, in the sense that it is not true if $\Omega$ is not simply connected, or if $u$ is $C^{\8}$ but not real analytic.


\vspace*{0.1cm}

}
\end{quote}


\section{Introduction}
Let $\Omega\subset \R^{n}$ be a $C^2$ smooth bounded domain, and let $u\in C^2(\overline{\Omega})$ be a solution to $\Delta u+1=0$ that satisfies overdetermined boundary conditions
 \begin{equation}\label{over}
 u=0 , \hspace{1cm} |Du|={\rm constant}  \hspace{0.5cm} \text{ on $\parc \Omega$}.
 \end{equation}
In his famous 1971 paper \cite{Se}, Serrin proved that in these conditions $\Omega$ is a ball and $u$ is a radial function. Starting with Serrin's paper (see also the influential work by Weinberger \cite{W}), there has been a great interest in extending Serrin's result to more general PDEs that satisfy the overdetermined conditions \eqref{over}. 
Ellipticity has typically been an essential component in all these extensions of Serrin's theorem.


In this paper we consider Serrin's overdetermined problem for general (not necessarily elliptic) fully nonlinear Hessian equations, i.e. 
\begin{equation}\label{serrineq}\def\arraystretch{1.6}\left\{\begin{array}{lll} F(D^2u)=0 & \text{ in } & \Omega, \\
u=0, \hspace{0.5cm} |Du|=c & \text{ on } & \parc \Omega, 
\end{array} \right.
\end{equation} 
where $F$ is a function on the space $\cM_n$ of all symmetric $n\times n$ matrices. To avoid meaningless situations, \emph{we will assume from now on that $F$ is never locally zero}, i.e. $F\not \equiv 0$ on any open set. A natural and necessary hypothesis on $F$ dictated by the nature of the boundary conditions \eqref{over} (see e.g. Silvestre and Sirakov \cite{SS}) is that $F$ is \emph{rotationally invariant}, i.e. $F(Q^t M Q)=F(M)$ for any $M\in \cM_n$ and any orthogonal matrix $Q$. Equivalently, $F$ is a symmetric function of the eigenvalues of the Hessian $D^2u$. 

When $F$ is elliptic in a suitable sense,  the existence of a solution $u$ to \eqref{serrineq} forces $\Omega$ to be a ball, and $u$ to be a radial function (see \cite{SS}). It is not surprising that, if $F$ is not elliptic, this is not true anymore; some simple counterexamples will be given in Example \ref{eje1}. So, in some sense, the rigidity given by ellipticity seems fundamental for the desired radial symmetry result to hold.

This situation makes our main result here somehow unexpected. We prove that if $\Omega\subset \R^2$ is simply connected, and $u$ is a real analytic solution to \eqref{serrineq}, then $\Omega$ is a disk and $u$ is radial. No ellipticity assumption is made on $F$, and no sign assumption is made on $u$:


\begin{theorem}\label{main}
Let $\Omega\subset \R^2$ be a smooth bounded, simply connected domain, let $F:\cM_2\flecha \R$ be rotationally invariant, and let $u\in C^{\omega}(\overline{\Omega})$ be a non-zero solution to \eqref{serrineq}. 

Then $\Omega$ is a disk and $u$ is a radial function with respect to the center of $\Omega$.
\end{theorem}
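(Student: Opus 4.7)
The plan is to reduce the general rotationally invariant equation to a specific Monge-Amp\`ere type equation using real analyticity, and then conclude via analytic methods.

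First I would observe that when $c\neq 0$ the boundary $\parc\Omega$ is itself real analytic, since it is a connected component of the analytic hypersurface $\{u=0\}$ at points where $\nabla u\neq 0$; the case $c=0$ requires separate treatment. In the Frenet frame $(\tau,\nu)$ along $\parc\Omega$, differentiating the boundary conditions gives $u_{\tau\tau}=-c\kappa$, $u_{\tau\nu}=0$ and $u_{\nu\nu}$ as the remaining free datum, where $\kappa$ is the curvature of $\parc\Omega$. The key structural observation is then this: the map $\phi:=(\Delta u,\det D^2u):\ov{\Omega}\to\R^2$ is real analytic, and its image must lie in $\pi(F^{-1}(0))$, where $\pi(M)=(\tr M,\det M)$. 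Rotational invariance of $F$ makes this image depend only on the unordered eigenvalue pair, and the assumption $F\not\equiv 0$ on any open set forces $\pi(F^{-1}(0))\subset\R^2$ to have empty interior. Real analyticity then forces $\mathrm{rank}(d\phi)\le 1$ throughout $\Omega$.

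This produces a dichotomy. If $\Delta u$ is constant, then $u$ satisfies an elliptic Poisson equation (the constant must be nonzero, else $u\equiv 0$ by the maximum principle), so the classical Serrin theorem applies directly. Otherwise, $\det D^2u=g(\Delta u)$ on $\Omega$ for a real analytic $g$, i.e.\ a possibly non-elliptic Monge-Amp\`ere type equation. For this second case I would complexify: extend $u$ to a holomorphic function $U(z,w)$ on a neighborhood of $\{(z,\bar z):z\in\ov{\Omega}\}\subset\C^2$, so the boundary conditions become $U|_{\tilde\Gamma}=0$ and $U_zU_w|_{\tilde\Gamma}=c^2/4$ along the complexification $\tilde\Gamma$ of $\parc\Omega$, while the equation becomes the holomorphic identity $U_{zz}U_{ww}=H(U_{zw})$. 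Differentiating the boundary identities along $\tilde\Gamma$ and using the PDE to solve for successive normal jets should produce an ODE-like constraint along $\parc\Omega$ whose global behavior, together with the requirement that $\parc\Omega$ bound a \emph{simply connected} domain, forces $\kappa$ to be constant; simple-connectedness is precisely what rules out the periodic annular counterexamples of Example~\ref{eje1}. Once $\Omega=B_R(p_0)$, radiality follows from rotational invariance: the infinitesimal-rotation field $V(x):=\nabla u(x)\cdot(x-p_0)^\perp$ is real analytic, vanishes with its gradient on $\parc\Omega$, and satisfies the linearization of $\det D^2u-g(\Delta u)=0$; a unique-continuation argument then gives $V\equiv 0$, so $u$ is radial about $p_0$.

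The main obstacle is the complex-analytic step: passing from the functional relation $\det D^2u=g(\Delta u)$ to constancy of $\kappa$. Without ellipticity, none of the classical symmetry tools (moving planes, Weinberger's $P$-function, Alexandrov reflection) is available, so the argument must rely in an essential way on the real analyticity of $u$ and $\parc\Omega$ together with the precise Monge-Amp\`ere structure produced in the reduction, and the jet propagation along the complexified boundary must be carried out globally, not merely locally. A secondary subtlety is the case $c=0$, where $\parc\Omega$ is not a regular level set of $u$ and must be handled by a direct analysis of the analytic jet on the boundary.
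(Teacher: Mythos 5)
Your opening reduction is sound and agrees with the paper's starting point: rotational invariance, $F\not\equiv 0$ on open sets, and real analyticity together force $J[\Delta u,\det D^2u]\equiv 0$ on $\Omega$ (this is precisely the paper's equation \eqref{jacero}). But from there your route diverges entirely from the paper, and the divergence is where the proof is missing. You explicitly label as ``the main obstacle'' the passage from the functional relation $\det D^2u = g(\Delta u)$ to the constancy of the boundary curvature via complexification and jet propagation along $\partial\Omega$; you do not prove this step, you only conjecture it. Worse, the paper's own Example \ref{eje2} is a warning that exactly this kind of local jet-propagation argument along the boundary \emph{cannot} succeed: there the authors exhibit a real analytic, non-radial solution on an analytic annulus satisfying all the local boundary and PDE constraints, so the boundary jets never force $\kappa$ to be constant. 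Only the simple connectedness of $\Omega$ separates the theorem from that counterexample, and your sketch gives no mechanism that uses it --- the phrase ``whose global behavior... forces $\kappa$ to be constant'' is asserting precisely the thing that has to be proven.

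The paper's actual mechanism is topological rather than analytic: it studies the two eigenline fields $L_1,L_2$ of $D^2u$, classifies the behavior at points where $D^2u$ is a multiple of the identity via Hopf's algebraic lemma on homogeneous polynomials satisfying $(\Delta w)^2 = \mu^2((\Delta w)^2 - 4\,\mathcal{H}(w))$ (three cases: $w$ harmonic, $w$ a power of a linear form, or $w$ radial), shows that in the radial case the whole function is radial (Proposition \ref{prop1}), that in the linear-power case the eigenfields extend analytically across the degeneracy curve (Proposition \ref{parabolic}), and that in the harmonic case the singularity of $L_1,L_2$ is isolated with \emph{negative} index $-n/2$. The boundary conditions make one eigenfield tangent to $\partial\Omega$, so Poincar\'e--Hopf forces index sum $+1$ on a simply connected domain, which is incompatible with all indices being negative; hence the radial case must occur, giving the theorem. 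Simple connectedness enters exactly and only through Poincar\'e--Hopf.

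Two further gaps in your sketch. First, your dichotomy ($\Delta u$ constant or $\det D^2u=g(\Delta u)$ globally) is not exhaustive: $J[\Delta u,\mathcal{H}(u)]=0$ yields only local factorizations through a common potential $\sigma$ with $\Delta u=\alpha\circ\sigma$, $\mathcal{H}(u)=\beta\circ\sigma$; when $\Delta u$ has interior critical points you cannot in general invert $\alpha$ to write $\mathcal{H}(u)$ globally as a function of $\Delta u$. Second, the final ``unique continuation'' step for the rotation field $V=\nabla u\cdot(x-p_0)^{\perp}$ is unjustified: the linearized operator is not elliptic, so you have no Aronszajn/Carleman estimate, and Holmgren's theorem requires a non-characteristic surface, which may fail here since the boundary can be characteristic for the degenerate equation. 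Real analyticity of $V$ alone does not give $V\equiv 0$ from second-order vanishing on $\partial\Omega$.
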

Remarkably, the topological hypothesis that $\Omega$ is simply connected cannot be weakened. Indeed, in Section \ref{sec:defi} we will show that there exist positive real analytic solutions $u$ to \eqref{serrineq} for which $u$ is non-radial and $\Omega\subset \R^2$ is diffeomorphic to an annulus. This example also shows the global nature of Theorem \ref{main}, and in particular indicates that it cannot follow from a local application of the Cauchy-Kowalevsky theorem along the boundary. In addition, there exist non-radial, $C^{\8}$ solutions $u$ to \eqref{serrineq} for $\Omega$ simply connected (see Example \ref{eje1}). Thus, Theorem \ref{main} is sharp in these directions.

Theorem \ref{main} is inspired by classical surface theory, and in particular by a theorem of K. Voss \cite{Vo} according to which any compact analytic Weingarten surface of genus zero immersed in $\R^3$ is a rotational sphere. The proof of Theorem \ref{main} is an application of the Poincaré-Hopf theorem to an adequate line field with singularities in $\overline{\Omega}$. We emphasize that this line field is not given by the gradient of a solution to $F(D^2 u)=0$, so in this sense the application of the Poincaré-Hopf theorem here is not very usual in PDE theory. This strategy was used by the second author in \cite{Mi} in order to solve overdetermined problems with non-constant boundary data for fully nonlinear elliptic equations, and also by Espinar and Mazet in \cite{EM} for solving the classification problem of $f$-extremal disks in the two-sphere $\S^2$. Both of these works are inspired by our previous paper \cite{GM3}, about uniqueness of immersed spheres modeled by elliptic PDEs in three-manifolds. The key tool in all these works is to use ellipticity in order to construct a line field on the surface with isolated singularities of negative index, and derive from there a contradiction with the Poincaré-Hopf theorem. However, in our present situation, the lack of ellipticity makes this approach unsuitable; the natural line field that we construct may have non-isolated singularities, and even at the isolated ones its index can be positive. 

The results in the present paper strengthen the connection between overdetermined problems and hypersurface theory, a connection already present in Serrin's theorem, and that has been exploited in many works, see e.g. \cite{DPW,DEP,EFM,EM,FV,FMV,HHP,Mi,RRS1,RRS2,RS,ScS,Si,T}. Nonetheless, to the authors' best knowledge, Theorem \ref{main} might be the first example of such connection for non-elliptic equations.

We next provide an outline of the proof of Theorem \ref{main}. Let $u$ be a non-radial, real analytic solution to \eqref{serrineq} on $\overline\Omega$. Then, the eigenlines of $D^2u$ define two analytic line fields $L_1,L_2$ on $\overline{\Omega}-\cU$, where $\cU$ is the set of points in $\overline\Omega$ where $D^2u$ is proportional to the identity, i.e. the set of points where $D^2 u$ has a double eigenvalue. We wish to analyze how $L_1,L_2$ extend across $\cU$.

In order to do this, we consider a point $p\in \cU$, and we let $w(x,y)$ be the first non-zero homogeneous polynomial of degree $n\geq 3$ in its series expansion around that point (if $w$ does not exist, the result is trivial). There will be three cases to consider.

If $w(x,y)$ is radially symmetric, we will prove in Proposition \ref{prop1} that $u(x,y)$ is also radially symmetric with respect to $p$, up to a translation. In that case, the result follows easily, and so we discard this situation. If $w(x,y)$ is some power of a linear function, we will prove in Section \ref{sec:ap} that $\cU$ is a real analytic regular curve around $p$, and that the eigenfields $L_1,L_2$ extend analytically across $p$. Finally, if $w(x,y)$ is not of any of these two types, then we will show in Section \ref{sec:defi} that it is a harmonic polynomial; in that case $p$ is isolated in $\cU$, and the Poincaré-Hopf index of the line fields $L_1,L_2$ around $p$ is negative. 

Once there, the proof ends as follows. By the previous discussion, both $L_1,L_2$ can be extended to line fields on $\overline{\Omega}$ with only isolated singularities, all of them of negative index. Also, the overdetermined conditions \eqref{over} imply that one among $L_1$ or $L_2$ is tangent to $\parc \Omega$ at each boundary point. Since $\Omega$ is simply connected, this provides a contradiction with the Poincaré-Hopf index theorem. The contradiction proves that $u$ is radial, and from there, that $\Omega$ is a disk.


\section{Necessity of the hypotheses}\label{sec:defi}

We will first show that the hypothesis that $u$ is real analytic in Theorem \ref{main} is necessary, by constructing a $C^{\8}$ solution to \eqref{serrineq}  that is not a radial function. In this construction, the domain $\Omega\subset \R^2$ is an arbitrary simply connected smooth (or even real analytic) bounded domain.

\begin{example}\label{eje1}
Let $\Omega\subset \R^2$ be the simply connected domain bounded by a real analytic regular Jordan curve $\gamma$ in $\R^2$. Given any $\rho>0$, let $f$ denote a smooth function on the closed disk $D_{\rho}=\overline{D(0,\rho)}$, with the following properties:
 \begin{enumerate}
 \item
$f$ is a radial function with respect to the origin. 
\item
The value of $f$ and all its derivatives vanish at every point of $\parc D_{\rho}$.
 \end{enumerate}
For instance, we can choose $$f(x,y)=\exp\left(\frac{-1}{\rho^2-(x^2+y^2)}\right).$$

Let now $D_1,\dots, D_k$ denote a collection of mutually disjoint closed disks in $\Omega$, of radius $\rho>0$. For each $i\in \{1,\dots, k\}$, let $u_i\in C^{\8}(D_i)$ be given by $$u_i(x,y):= f(x-a_i,y-b_i)$$ where $(a_i,b_i)$ is the center of $D_i$. 

%
Define now the function $u\in C^{\8}(\overline{\Omega})$ by 
$$u(x,y)=u_i(x,y) \hspace{0.3cm} \text{ if } \hspace{0.3cm} (x,y)\in D_i,\hspace{1cm} u(x,y)=0 \text{ otherwise}.$$ 

\noindent Note that $u=|Du|=0$ along $\parc \Omega$, i.e. $u$ satisfies the overdetermined boundary conditions in \eqref{serrineq} for the choice $c=0$.

Since each function $u_i$ is radial with respect to some point in $\R^2$, it follows that the Jacobian

\begin{equation}\label{jaceroo}
J[\Delta u, \cH(u)]:= (\Delta u)_x (\cH(u))_y-(\Delta u)_y (\cH(u))_x=0
\end{equation}

\noindent on $\Omega$, where we are denoting $\cH(u):={\rm det}(D^2u)$. This implies by a classical theorem of Brown and Sard, see e.g. \cite{new}, that there exists a smooth function $\Phi(s,t)$ with $\Phi(0,0)=0$ such that 

\begin{equation}\label{phieq}
\Phi(\Delta u, \cH(u))=0.
\end{equation}

\noindent By considering the function $F\in C^\8(\mathcal{M}_2)$ associated to $\Phi$ in the obvious way, this implies that $u$ is a non-radial $C^{\8}$ solution to \eqref{serrineq}, for $c=0$ and the above choices of $\Omega$ and $F$.

\end{example}

We remark that, even though the function $u$ constructed in Example \ref{eje1} is not real analytic, the function $\Phi$ (and so, the function $F$ in \eqref{serrineq}) can be chosen to be real analytic in many cases. 

The next example shows that the hypothesis that $\Omega$ is simply connected in Theorem \ref{main} cannot be removed. For that, we will construct real analytic, non-radial solutions to overdetermined problems of the form \eqref{serrineq}, with $F\not\equiv 0$ real analytic and rotationally symmetric, on planar domains $\Omega\subset \R^2$ diffeomorphic to an annulus but that are not radially symmetric in general.

\begin{example}\label{eje2}
Let $\gamma(s):=(\alfa(s),\beta(s))$, $s\in[0,L]$, be a real analytic, regular Jordan curve in $\R^2$ parametrized by arc-length, and let $\nu(s)$ be its unit normal. Assume that the normal map $$\Psi(s,t):= \gamma(s)+t\nu(s) : \R/(L\Z) \times[-1,1] \flecha \R^2$$ is a real analytic diffeomorphism onto the compact planar region $$\overline{\Omega}:=\{\gamma(s)+t\nu(s) : |t|\leq 1\}\subset \R^2.$$ Note that $\overline{\Omega}$ is real analytic and diffeomorphic to an annulus. We remark that the condition that $\Psi$ is a local diffeomorphism is equivalent to the curvature $\kappa(s)$ of $\gamma$ satisfying $|\kappa|<1$ at every point. If the curve $\gamma(s)$ is chosen to be convex, the condition $|\kappa|<1$ is also sufficient for $\Psi$ being a real analytic diffeomorphism.


Define $u\in C^{\omega}(\overline{\Omega})$ by $$u(\Psi(s,t))=1-t^2.$$ A computation using $x=\alfa(s)-t\beta'(s)$, $y=\beta(s)+t\alfa'(s)$ shows that
\begin{equation}\label{ecuu}
u_{xx}+u_{yy}= -2 +\frac{2t\kappa(s)}{1-t \kappa(s)}, \hspace{1cm} u_{xx} u_{yy}-u_{xy}^2=\frac{-4t \kappa(s)}{1-t \kappa(s)}.\end{equation}

\noindent Let $\phi_1(s,t)$ and $\phi_2(s,t)$ denote the right-hand sides in \eqref{ecuu}. A computation shows that the Jacobian determinant of the map $(s,t)\mapsto (\phi_1,\phi_2)$ vanishes identically. Consequently, by \eqref{ecuu}, we have that $u(x,y)$ satisfies \eqref{jaceroo}. Arguing as in Example \ref{eje1}, we conclude that $u$ is a solution to \eqref{phieq} on $\overline{\Omega}$, for some real analytic function $\Phi$.

Finally, we note that $u$ satisfies the overdetermined boundary conditions \eqref{over}. That $u=0$ along $\parc \Omega$ is clear by construction. A computation shows that $Du=(u_x,u_y)=(2t\beta'(s),-2t \alfa'(s))$, which has constant length for each fixed value of $t$. In particular, $|Du|=2$ along $\parc \Omega$, which corresponds to $t=\pm 1$.

To sum up, $u\in C^{\omega}(\overline{\Omega})$ is a real analytic, non-radial solution to \eqref{serrineq} for the real analytic choice of $\Phi$ above, and for the real analytic annulus $\Omega$ in $\R^2$. Note that, in general, $\Omega$ is not radially symmetric, and the two closed curves in $\parc \Omega$ are not necessarily convex.

\begin{figure}[h]
\begin{center}
\includegraphics[width=10cm]{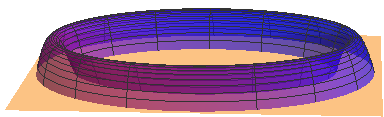}
\caption{A non-radial, analytic solution to \eqref{serrineq} in an elliptical region, with base on the ellipse $4x^2+y^2=64$.} \label{compara}
\end{center}
\end{figure}

\end{example}

\section{Behavior around points with a double eigenvalue of $D^2u$}\label{sec:main}

Let $u=u(x,y)$ denote a real analytic solution to $F(D^2 u)=0$ on a regular planar domain $\overline\Omega\subset \R^2$, where $F:\cM_2\flecha \R$ is rotationally invariant. Denoting $\cH(u):={\rm det}(D^2 u) = u_{xx} u_{yy}-u_{xy}^2$, this equation can be rewritten as 
 \begin{equation}\label{weq}
 \Phi(\Delta u, \cH(u))=0,
 \end{equation}
where $\Phi$ is not identically zero on any open set of $\R^2$, by the related hypothesis on $F$. Denote 
 \begin{equation}\label{nees}
\cU= \{p\in \overline\Omega : D^2 u(p)=\landa \, {\rm Id} \text{ for some $\landa\in \R$}\} = \{ p\in \Omega: (\Delta u(p))^2= 4\cH(p)\}.
 \end{equation}
Note that in general $(\Delta u(p))^2\geq  4\cH(p)$ for every $p\in \overline\Omega$.

Choose $p_0\in \cU$. Since problem \eqref{serrineq} is invariant by translations in the $(x,y)$-variables, we may assume without loss of generality that $p_0=(0,0)$. We will also assume: \emph{$u$ is not a polynomial of degree at most two} (note that the statement of Theorem \ref{main} is trivial if $u$ is such a polynomial). Observe that since $u$ is real analytic in $\overline{\Omega}$, it can be extended to a real analytic function on an open set $\Omega_0$ containing $\overline\Omega$; in particular, $u$ can be assumed to be well defined and real analytic around $(0,0)$, even if this point lies in the boundary $\parc \Omega$.

It follows from \eqref{weq} and the fact that $\Phi\not\equiv 0$ on open sets that $u$ satisfies in $\Omega$ the equation
\begin{equation}\label{jacero}
J[\Delta u,\cH(u)]=0,
\end{equation}
where we are denoting for $f,g$ of class $C^1$ $$J[f,g]:=f_x g_y-f_y g_x.$$

Indeed, if $J[\Delta u,\cH(u)]\neq 0$ around some point in $\Omega$, then $(s,t):=(\Delta u, \cH(u))$ are local parameters around that point, and from \eqref{weq} we would obtain that $\Phi$ vanishes in an open set of $\R^2$, a contradiction. 

Therefore, by real analyticity, $u$ satisfies \eqref{jacero} on $\Omega_0\supset\overline\Omega$. Thus, there exists a non-constant real analytic function $\sigma(x,y)$ defined in a neighborhood of $(0,0)$, with $\sigma(0,0)=0$, and non-constant real analytic functions of one variable $\alfa(t),\beta(t)$ such that 
 \begin{equation}\label{fifi00} 
 \Delta u= \alfa\circ \sigma, \hspace{1cm} \cH(u)=\beta \circ \sigma.
 \end{equation} Therefore, there exist non-negative real analytic functions $\phi(t),\varphi(t)$ with $\phi(0)=\varphi(0)=0$ (since $(0,0)\in \cU$) such that 
 \begin{equation}\label{fifi}
 (\Delta u)^2 -4 \cH(u) = \phi \circ \sigma, \hspace{1cm} (\Delta u -\Delta u(p_0))^2 = \varphi \circ \sigma.
 \end{equation}
Since $u$ is not a polynomial of degree $\leq 2$, we can define $w(x,y)$ as the first homogeneous polynomial of degree $\geq 3$ in the Taylor series expansion of $u(x,y)$ around $(0,0)$. Since $(0,0)\in \cU$, we have then around $(0,0)$
 \begin{equation}\label{fifii}
 u(x,y) = c_0+ax+by + \frac{\landa}{2} (x^2+y^2) + w(x,y) + \cdots 
 \end{equation}
From here, a simple power series expansion around the origin shows that 
 \begin{equation}\label{fifi2}
 (\Delta u)^2 -4 \cH(u) =  (\Delta w)^2 -4 \cH(w) +\cdots, \hspace{1cm} (\Delta u -\Delta u(p_0))^2 = (\Delta w )^2 + \cdots 
 \end{equation}
By \eqref{fifi}, this implies that both $(\Delta w)^2 -4 \cH(w)$ and $(\Delta w )^2$ are proportional to $\hat{\sigma}(x,y)^l$ for some $l\geq 1$, where $\hat{\sigma}$ is the first non-zero term in the Taylor series expansion of $\sigma$ at $(0,0)$. In particular, since $(\Delta w)^2 -4 \cH(w)$ is not zero, we have 
 \begin{equation}\label{fifi3}
(\Delta w)^2= \mu^2 ( (\Delta w)^2 -4 \cH(w) ) \end{equation}
for some $\mu\in \R$. Once here, a classical algebraic lemma by Hopf \cite{Ho0} shows that if a homogenous polynomial $w(x,y)$ of degree $n+2\geq 3$ satisfies \eqref{fifi3}, then after a rotation in the $(x,y)$ coordinates, one of the following three situations happens, where we denote $\zeta:=x+iy$:

\begin{enumerate}
\item[{\bf (C.1)}]
$\mu=0$, and $w(x,y)= a \,{\rm Re}(\zeta^{n+2})$ for $a\neq 0$.\vspace{0.1cm}
\item[{\bf (C.2)}]
$\mu=1$, and $w(x,y)=a\,x^{n+2}$ for $a\neq 0$.\vspace{0.1cm}
 \item[{\bf (C.3)}]
$\mu=1+1/k$ for some positive integer $k$, and $w(x,y)=a |\zeta|^{2k+2}$, with $n=2k$, for $a\neq 0$.
\end{enumerate}

Let $\landa\in \R$ be the value for which $D^2u(0,0)=\landa\, {\rm Id}$, and let us write 
\begin{equation}\label{defu1}
u(x,y)=c_0+ax+by+ \frac{\landa}{2}(x^2+y^2)+ u_1(x,y),
\end{equation}
where $a,b,c_0\in \R$ and $u_1(x,y)$ satisfies $u_1(0,0)=Du_1(0,0)=0$. Note that also $D^2u_1(0,0)$ vanishes, by definition of $\landa$. Also, by \eqref{jacero}, we have
\begin{equation}\label{jauno}
J[\Delta u_1,\cH(u_1)]=0.
\end{equation}

The next proposition shows that case {\bf (C.3)} above can only happen, even locally, under very restrictive conditions.

\begin{proposition}\label{prop1}
Assume that case {\bf (C.3)} above happens, i.e. $\mu=1+1/k$ holds in \eqref{fifi3}. Then $u_1(x,y)$ is radially symmetric with respect to $(0,0)$, i.e. $u_1(x,y)$ only depends on $\sqrt{x^2+y^2}.$
\end{proposition}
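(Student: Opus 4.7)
\medskip
The plan is to prove, by induction on $N\geq 2k+2$, that every homogeneous term $u_1^{(N)}$ of degree $N$ in the Taylor expansion of $u_1$ at $(0,0)$ is a radial polynomial; summing in $N$ then yields the proposition. The base case $N=2k+2$ is exactly case {\bf (C.3)}, which asserts $u_1^{(2k+2)}=a(x^2+y^2)^{k+1}$.

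For the inductive step, I would assume that $u_1^{(j)}$ is radial for every $2k+2\leq j\leq N-1$ with $N\geq 2k+3$, and expand $v_N:=u_1^{(N)}=\sum_{p+q=N}v_{p,q}\zeta^p\bar\zeta^q$ in the complex coordinate $\zeta=x+iy$; radiality of $v_N$ then amounts to $v_{p,q}=0$ for every $p\neq q$. Using $\Delta u=4\parc_\zeta\parc_{\bar\zeta}u$ and $\cH(u)=4(u_{\zeta\bar\zeta}^2-u_{\zeta\zeta}u_{\bar\zeta\bar\zeta})$, the inductive hypothesis forces the leading non-radial part of $\Delta u_1$ to sit at degree $N-2$ and coincide with $\Delta v_N$, while the leading non-radial part of $\cH(u_1)$ sits at degree $2k+N-2$ and arises only from the cross terms between $a(\zeta\bar\zeta)^{k+1}$ and $v_N$. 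A direct computation of the latter quantity yields
\[
\gamma := 4a(k+1)\sum_{p+q=N}v_{p,q}\bigl[2(2k+1)pq-kN(N-1)\bigr]\zeta^{k+p-1}\bar\zeta^{k+q-1}.
\]

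Next, I would plug these leading expansions into \eqref{jauno}; since Jacobians of radial functions vanish and a short degree count rules out every other contribution at the leading degree $4k+N-4$, the identity $J[\Delta u_1,\cH(u_1)]=0$ reduces at that order to $\parc_\theta \gamma = 2a(2k+1)\,r^{2k}\,\parc_\theta(\Delta v_N)$, where $\parc_\theta = x\parc_y-y\parc_x$. Separating both sides in Fourier modes $e^{i(p-q)\theta}$ and comparing coefficients, every $(p,q)$ with $p+q=N$ and $p\neq q$ must satisfy
\[
\bigl[\,N(N-2k-2)+(2k+1)(p-q)^2\,\bigr]\,v_{p,q}=0.
\]
Since $N\geq 2k+3$ implies $N(N-2k-2)\geq N>0$, the bracket is strictly positive, forcing $v_{p,q}=0$ for every $p\neq q$. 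Hence $v_N$ is radial and the induction closes.

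The hard part will be the explicit computation of the cross term $\gamma$ together with the verification that no other contribution to $J[\Delta u_1,\cH(u_1)]$ reaches the leading degree $4k+N-4$; both rely on the inductive hypothesis and on elementary degree counting, but these bookkeeping steps are the technical core of the argument. Once they are in place, the conclusion becomes a matter of algebra: the decisive positivity $N-2k-2\geq 1$ comes directly from the case {\bf (C.3)} hypothesis that the Taylor series of $u_1$ begins at degree $2k+2$, and it is exactly what forces all non-radial Fourier modes of $v_N$ to vanish.
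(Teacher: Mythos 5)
Your proposal is correct and follows essentially the same strategy as the paper. Both arguments isolate the first homogeneous Taylor coefficient of $u_1$ of degree $N=m+2>2k+2$ that is not forced to be radial (your $v_N$, the paper's $\eta$), extract the leading-order cancellation in $J[\Delta u_1,\cH(u_1)]=0$ coming from the cross term $\{w,v_N\}$, and show this forces the non-radial Fourier content of $v_N$ to vanish; the paper phrases this as a contradiction argument in polar coordinates, obtaining a third-order ODE $\alpha_1 c'''=\alpha_2 c'$ for the angular factor $c(\theta)$, while you run it as induction on degree and separate modes $\zeta^p\bar\zeta^q$, but these are the same computation in different clothing. Two remarks. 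First, your cross-term expansion $\gamma=4a(k+1)\sum v_{p,q}\bigl[2(2k+1)pq-kN(N-1)\bigr]\zeta^{k+p-1}\bar\zeta^{k+q-1}$ and the resulting positivity factor $N(N-2k-2)+(2k+1)(p-q)^2$ are correct; interestingly, they are \emph{cleaner} than what the paper's formula \eqref{corchete} produces, because that formula contains a minor computational slip (the correct polar expansion is $\{w,\eta\}=a(n+2)\varrho^{n+m}\bigl((n+1)(c''+(m+2)^2c)-n(m+1)(m+2)c\bigr)$, giving $(n+1)c'''=(m+2)(m-n)c'$), though the coefficients $\alpha_1,\alpha_2$ stay positive either way so the paper's conclusion is unaffected. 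Second, you explicitly flag the degree bookkeeping (that no other term of $J[\Delta u_1,\cH(u_1)]$ reaches degree $4k+N-4$) as the technical core left to check; this does go through — the inductive hypothesis kills all radial--radial Jacobians and the inequality $N>2k+2$ pushes every remaining contribution strictly above $4k+N-4$ — but you should write it out before considering the proof complete.
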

\begin{proof}
Note that the leading homogeneous polynomial of $u_1$ is $w(x,y)$, of degree $n+2\geq 3$. As we are in case {\bf (C.3)}, we have 
 \begin{equation}\label{w3}
w(x,y)=a \varrho^{n+2},
 \end{equation} 
where $\varrho:=\sqrt{x^2+y^2}$ and $a\neq 0$.

Assume that $u_1$ is not radial. Then we can write $u_1(x,y)=h(\varrho)+\eta(x,y)+\cdots$, where $\eta(x,y)$ is a non-radial homogeneous polynomial of degree $m+2>n+2$, and $h(\varrho)$ denotes a radial polynomial of degree less than $m+2$. Denote, for $f,g$ of class $C^2$, the operator $$\{f,g\}:=f_{xx}g_{yy} + f_{yy}g_{xx}-2 f_{xy} g_{xy}.$$ 

\noindent Then, if we write $\psi(x,y):=u_1(x,y)-h(\varrho)$, we have
\begin{equation}\label{corche}\def\arraystretch{1.4}
\begin{array}{lll} J[\Delta u_1,\cH(u_1)] & =& J[\Delta h + \Delta \psi,\cH(h)+\cH(\psi)+\{h,\psi\}]
\\ & =& J[\Delta h,\cH(h)] + J[\Delta h,\{h,\psi\}] + J[\Delta\psi ,\cH(h)] \\ & & + J[\Delta h,\cH(\psi)]+ J[\Delta \psi,\cH(\psi)+\{h,\psi\}].\end{array}
\end{equation} 
Note that $J[\Delta h,\cH(h)]=0$, since both $\Delta h$, $\cH(h)$ are radial functions. Also, the least order term in the series expansions of the right-hand side of \eqref{corche} is given by $J[\Delta w,\{w,\eta\}] + J[\Delta\eta ,\cH(w)]$, which has degree $2n+m-2$ (if it is not identically zero). But now, since $u_1$ satisfies \eqref{jauno}, 
 we obtain
 \begin{equation}\label{jodelta}
J[\Delta w,\{w,\eta\}] + J[\Delta\eta ,\cH(w)]=0.
 \end{equation}

%
%
%
 
\noindent We next compute the left-hand side of \eqref{jodelta}. By \eqref{w3}, we have 
\begin{equation}\label{dhw}
\Delta w =a(n+2)^2 \varrho^n, \hspace{0.5cm} \cH(w)=a^2(n+1)(n+2)^2 \varrho^{2n}.
\end{equation} 
If we write $\eta(x,y)$ in polar coordinates as $\eta=c(\theta) \varrho^{m+2}$, then 
 \begin{equation}\label{laeta}
 \Delta \eta= \varrho^m (c''(\theta)+(m+2)^2 c(\theta)).
 \end{equation}
A longer but also straightforward computation, again changing to polar coordinates, shows that
 \begin{equation}\label{corchete}
 \{w,\eta\}=\frac{a}{2}(n+2)\varrho^{n+m}((n+4)(c''(\theta)+(m+2)^2c(\theta))-2n(m+1)(m+2)c(\theta)).
 \end{equation}
 
\noindent Moreover, if we express the Jacobians in \eqref{jodelta} also in polar coordinates, we obtain
\begin{equation}\label{jodelta2}
(\Delta w)_{\rho} (\{w,\eta\})_{\theta}-(\Delta w)_{\theta} (\{w,\eta\})_{\rho} = -(\Delta \eta)_{\rho} (\cH(w))_{\theta}+(\Delta \eta)_{\theta} (\cH(w))_{\rho}.
\end{equation}

\noindent Note that $\Delta w$ and $\cH(w)$ do not depend on $\theta$, by \eqref{dhw}. With this, a computation from \eqref{jodelta2} using \eqref{dhw}, \eqref{laeta} and \eqref{corchete} shows that there exist positive constants $\alfa_1,\alfa_2>0$ such that $$\alfa_1 c'''(\theta)=\alfa_2 c'(\theta).$$ 

\noindent Since $c'(\theta)$ is a periodic function, we necessarily have then $c'(\theta)=0$, i.e., $c(\theta)$ is constant.  This implies that the homogeneous polynomial $\eta=c(\theta)\varrho^{m+2}$ is radial, which contradicts our hypothesis. This finishes the proof of Proposition \ref{prop1}.

\end{proof}

It might be interesting to remark that Proposition \ref{prop1} and the discussion previous to it implies the following consequence, of local nature, which does not use the boundary conditions:

\begin{corollary}
Let $u(x,y)$ be a real analytic function satisfying $J[\Delta u, \cH(u)]=0$. Assume that near the origin, $u$ has the form $$u(x,y)=\frac{\landa}{2}(x^2+y^2) + w(x,y)+ o(\varrho)^k, \hspace{0.5cm} \varrho :=\sqrt{x^2+y^2},$$ where $w(x,y)$ is a homogeneous polynomial of degree $k\geq 3$ that is neither harmonic nor a power of a linear function $(\alfa x+\beta y)^k$, $(\alfa, \beta)\neq (0,0)$. Then $u$ is radial, i.e. $u=u(\varrho)$.
\end{corollary}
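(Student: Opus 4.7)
The plan is to deduce the corollary directly from Proposition \ref{prop1}, since the two exceptional shapes ruled out by the hypothesis on $w$ are precisely the cases {\bf (C.1)} and {\bf (C.2)} from Hopf's algebraic classification.

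First I would translate the local picture so that the setup of Section \ref{sec:main} applies verbatim. Set $u_1(x,y):=u(x,y)-\frac{\landa}{2}(x^2+y^2)$; then $u_1(0)=0$, $Du_1(0)=0$, $D^2u_1(0)=0$, and the leading nonzero term in its Taylor expansion is $w$, of degree $k\geq 3$. A quick check using $\Delta u=2\landa+\Delta u_1$ and $\cH(u)=\landa^2+\landa\Delta u_1+\cH(u_1)$ shows that $J[\Delta u_1,\cH(u_1)]=J[\Delta u,\cH(u)]=0$. Since $w\neq 0$ has degree $\geq 3$, $u_1$ is not a polynomial of degree $\leq 2$, and the origin plays the role of a point in the set $\cU$ of \eqref{nees}.

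Second, I would apply the chain of derivations from \eqref{jacero} through \eqref{fifi3} to $u_1$. Only the real analyticity of $u_1$, the identity $J[\Delta u_1,\cH(u_1)]=0$, and the structure of the Taylor expansion at the origin are used in that chain, so it transplants with no modification and yields a real number $\mu$ with
$$(\Delta w)^2 = \mu^2\bigl((\Delta w)^2 - 4\cH(w)\bigr).$$
Hopf's lemma then forces $w$, after a suitable rotation in the $(x,y)$ coordinates, into exactly one of the cases {\bf (C.1)}, {\bf (C.2)}, {\bf (C.3)}. Case {\bf (C.1)} gives $w=a\,\mathrm{Re}(\zeta^k)$, which is harmonic; case {\bf (C.2)} gives $w=ax^k$, a power of a linear function. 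Both are excluded by hypothesis, so we are necessarily in case {\bf (C.3)}. Proposition \ref{prop1} then applies and gives that $u_1$ is radially symmetric about the origin, and therefore $u=\frac{\landa}{2}(x^2+y^2)+u_1$ is radial as well.

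I do not expect a genuine difficulty: the entire content beyond Proposition \ref{prop1} is the verification that Hopf's trichotomy can be applied here. The only subtle point, to be checked carefully, is that the derivation of \eqref{fifi3} in Section \ref{sec:main} is purely local and uses neither the overdetermined boundary conditions nor any structural assumption on $F$ beyond the already-reduced equation $J[\Delta u,\cH(u)]=0$; rereading shows this is indeed the case, so the transplantation to $u_1$ is legitimate.
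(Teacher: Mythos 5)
Your proposal is correct and follows exactly the path the paper intends: the corollary is stated in the paper as a consequence of Proposition \ref{prop1} and the preceding discussion (the derivation of \eqref{fifi3} and Hopf's trichotomy), and your argument fills in precisely those steps, including the key (and correct) verification that $J[\Delta u,\cH(u)]=J[\Delta u_1,\cH(u_1)]$ so the chain from \eqref{jacero} through \eqref{fifi3} transplants to $u_1$, and the observation that cases {\bf (C.1)} and {\bf (C.2)} correspond to $w$ harmonic and $w$ a power of a linear form, both excluded by hypothesis.
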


\section{Continuity of eigendirections when $\mu=1$}\label{sec:ap}

In what follows we keep the notation of Section \ref{sec:main}. In particular, $p_0=(0,0)\in \cU$ is a point in $\overline\Omega$ where $D^2u=\landa {\rm Id}$ for some $\landa \in \R$. Recall that we can extend $u$ as a real analytic function to an open set $\Omega_0\supset \overline\Omega$; thus, $u$ is real analytic in a neighborhood of $(0,0)$ even if this point lies in $\parc\Omega$.

Also, recall that we can define for each $p\in \overline\Omega\setminus \cU$ the eigenlines $L_1(p),L_2(p)$ of $D^2 u(p)$. Thus, $L_1,L_2$ define two real analytic line fields on $\overline\Omega\setminus \cU$; they are given in coordinates with respect to the basis $(dx,dy)$ as the solutions to 
\begin{equation}\label{lain}
-u_{xy} (dx^2-dy^2)+ (u_{xx}-u_{yy}) dx dy =0.
\end{equation}
These eigenlines naturally extend to $\Omega_0\setminus \cU_0$, where  $$\cU_0:=\{p\in \Omega_0 : D^2 u(p) = \landa {\rm Id} \text{ for some $\landa =\landa(p)\in \R$}\}\supset \cU.$$ 

The next result shows that $L_1,L_2$ can be analytically extended around the origin, if $(0,0)\in \cU$ is in case {\bf (C.2)} above:

\begin{proposition}\label{parabolic}
Assume that $p_0=(0,0)\in \cU$, and that case {\bf (C.2)} above happens at $p_0$, i.e. $\mu=1$ in \eqref{fifi3}. Then, there exists $\ep>0$ such that $\Gamma:=\D(\ep)\cap \cU_0$ is a regular, real analytic curve passing through $(0,0)$, and $L_1,L_2$ extend analytically across $\Gamma$, i.e. they define real analytic line fields on $\D(\ep):=\{\xi\in \R^2 : |\xi|<\ep\}$. Moreover, one of $L_1,L_2$ is tangent to $\Gamma$ at $(0,0)$.
\end{proposition}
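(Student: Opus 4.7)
My plan is to work in the rotated coordinates where the leading polynomial is $w(x,y)=ax^{n+2}$ with $a\neq 0$ (case \textbf{(C.2)}). Since $\cH(w)=0$, both leading terms in \eqref{fifi2} reduce to $a^2(n+2)^2(n+1)^2 x^{2n}$, and hence by \eqref{fifi} the leading homogeneous part $\hat\sigma$ of $\sigma$ is a nonzero multiple of a pure power of $x$. Write $\hat\sigma(x,y)=C x^m$ with $mj=n$, where $j\geq 1$ is the vanishing order at $t=0$ of $\hat\alpha(t):=\alpha(t)-2\lambda$. Applying Weierstrass preparation I would express $\sigma = U\cdot P$ with $U(0,0)\neq 0$ and $P$ a Weierstrass polynomial of degree $m$ in $x$ whose coefficients vanish at $y=0$.

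The central step, which I expect to be the main obstacle, is upgrading the monomial leading term of $\sigma$ to a one-branch factorisation $P(x,y)=(x-\gamma(y))^m$ for some real analytic $\gamma$ with $\gamma(0)=0$. My intended tool is to combine the uniqueness of Weierstrass preparation with the identity $\Delta u-2\lambda=\sigma^j\tilde\alpha(\sigma)$, $\tilde\alpha(0)\neq 0$, which implies that the Weierstrass polynomial of $\Delta u-2\lambda$ agrees with $P^j$. Then $J[\Delta u,\cH(u)]=0$, together with the functional-dependence identities in \eqref{fifi} and the fact that $\cH(u)-\lambda\Delta u+\lambda^2$ vanishes on $\cU_0$, should rule out, order by order in the Taylor expansion of $u$ at $(0,0)$, any additional Newton--Puiseux branch of $\{P=0\}$: such a branch would produce, at the corresponding Taylor order, terms in the two sides of \eqref{fifi} incompatible with both being analytic functions of a single $\sigma$. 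Once single-branchedness is established, $\Gamma := \D(\ep)\cap\cU_0 = \{x=\gamma(y)\}$ is a regular real analytic curve through the origin in a small disk.

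The analytic extension of $L_1, L_2$ then follows by factoring \eqref{lain}. Since $\phi(t)$ vanishes to order $2j$ at $t=0$ with positive leading coefficient, $\phi(t)=q(t)^2$ with $q(t)=t^j\sqrt{\tilde\phi(t)}$ real analytic, and the discriminant of \eqref{lain} becomes the analytic square $q(\sigma)^2$. Substituting $q(\sigma)$ into the quadratic formula for the roots of \eqref{lain} yields an analytic factorisation of the quadratic form as a product of two real linear differentials with real analytic coefficients on $\D(\ep)$, and these are the analytic extensions of $L_1,L_2$ across $\Gamma$. Finally, in our rotated coordinates $u_{xx}-u_{yy}=a(n+2)(n+1)x^n+O(|(x,y)|^{n+1})$ and $u_{xy}=O(|(x,y)|^{n+1})$, so the two eigendirections of $D^2u$ near $(0,0)$ converge to $\partial_x$ and $\partial_y$; the one with direction $\partial_y$ at the origin is tangent to $\Gamma$, which is itself tangent to the $y$-axis at $(0,0)$ since $\gamma(0)=\gamma'(0)=0$.
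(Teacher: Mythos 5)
Your proposal takes a genuinely different route from the paper and, as you yourself flag, contains a real gap at its central step. The paper does not use Weierstrass preparation or Newton--Puiseux branches; instead it studies the Taylor expansion of $u_1=u-(c_0+ax+by+\tfrac{\lambda}{2}(x^2+y^2))$ directly. It introduces $\eta$, the first homogeneous term of $u_1$ not divisible by $x^{n+2}$, and splits into three cases by the degree of $\eta$ relative to $n+3$. In each surviving case it obtains a factorization of the full matrix $D^2u_1 = s^n A(s,t)$ with $A(0,0)$ not a multiple of the identity (using, in the second case, a clever coprimality argument between the vanishing orders $n$ of $\Delta u_1$ and $2n+1$ of $\cH(u_1)$ to conclude $D\sigma_1(0,0)\neq 0$), and it rules out the remaining case by an explicit computation showing $\eta_{yyy}=0$ forces a contradiction. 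This gives both the regularity of $\Gamma$ and the extension of $L_1,L_2$ simultaneously.

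Your single-branchedness step --- upgrading $\hat\sigma = Cx^m$ to $P=(x-\gamma(y))^m$ --- is precisely what the paper's case analysis accomplishes, but your outline (``rule out, order by order, any additional Newton--Puiseux branch'') is a plan, not a proof. In particular, the paper's treatment of Case~2 ($\eta$ of degree $n+3$) shows that the conclusion there is not merely $m=1$ but $D\sigma_1(0,0)\neq 0$, obtained via the coprimality of $n$ and $2n+1$; it is not clear how your proposed ``incompatibility of the two sides of \eqref{fifi} with a single $\sigma$'' would reproduce this without essentially redoing that analysis. You should either carry out the branch-exclusion argument or adopt the paper's case split.

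Separately, the step ``Substituting $q(\sigma)$ into the quadratic formula for the roots of \eqref{lain} yields an analytic factorisation'' is incorrect as stated. The quadratic formula for the slopes requires dividing by $u_{xy}$, which vanishes at the origin (and possibly along a curve), so the resulting expressions are not manifestly analytic. Knowing that the discriminant $(u_{xx}-u_{yy})^2+4u_{xy}^2$ equals the analytic square $q(\sigma)^2$ does \emph{not}, by itself, produce an analytic factorization of the binary quadratic form. What does work, once single-branchedness and the regularity of $\Gamma$ are known, is the following: in coordinates where $\Gamma=\{s=0\}$, the identity $(u_{xx}-u_{yy})^2+4u_{xy}^2=s^{2n}G$ with $G(0,0)>0$ forces both $u_{xx}-u_{yy}$ and $u_{xy}$ to vanish to order $n$ along $s=0$ (each vanishes on $\{s=0\}$ since their squares sum to zero there; divide by $s$ and iterate). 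Hence $D^2u-\lambda\,\mathrm{Id}=s^nA$ with $A(0,0)\neq 0$ not proportional to the identity, and \eqref{lain} reduces to a nondegenerate form defining the analytic extension of $L_1,L_2$. This is what the paper achieves directly, and what your discriminant argument would need to be replaced with to be correct.
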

\begin{proof}
Let $u_1(x,y)$ be the real analytic function defined by \eqref{defu1}. 
%
%
By \eqref{defu1} and \eqref{fifii}, the first term in the series expansion of $u_1(x,y)$ around the origin is equal to $w(x,y)= a x^{n+2}$. Let us define $\eta(x,y)$ as the least order homogeneous polynomial in the Taylor series of $u_1(x,y)$ that is \emph{not} divisible by $x^{n+2}$. Thus, in case it exists, its degree is $m+2>n+2$. We will consider three cases:

{\bf Case 1:} \emph{$\eta(x,y)$ does not exist.} Therefore, $u_1(x,y)=x^{n+2} \phi(x,y)$ for some real analytic function $\phi$ around the origin, with $\phi(0,0)=a\neq 0$. Thus, using \eqref{defu1}, a simple computation shows that $$(\Delta u)^2-4\cH(u) = (\Delta u_1)^2 -4 \cH(u_1)= x^{2n} G(x,y),$$ for some real analytic function $G(x,y)$ with $G(0,0)=(2+3n+n^2)a^2>0$ (since $\phi(0,0)=a\neq 0$). This implies by \eqref{nees} that there exists $\ep >0$ such that $\D(\ep)\cap \cU$ coincides with the $x=0$ axis. 

Using \eqref{defu1} in a similar way, the equation \eqref{lain} for the eigenlines $L_1,L_2$ is written as:
\begin{equation}\label{lain5}
x^n \left( \Phi_1(x,y) (dx^2-dy^2) + \Phi_2(x,y) dx dy \right)=0,
\end{equation}
where $$\Phi_1(x,y):= -(n+2)x\phi_y - x^2 \phi_{xy}, \hspace{0.5cm} \Phi_2(x,y):= (n+1)(n+2)\phi+x^2(\phi_{xx}-\phi_{yy}).$$
Obviously, \eqref{lain5} defines for each $x\neq 0$ the same directions as 
 \begin{equation}\label{lain6}
\Phi_1(x,y) (dx^2-dy^2) + \Phi_2(x,y) dx dy =0.
 \end{equation}
Moreover, for $x=0$ and $y$ small enough, \eqref{lain6} is just $dx dy=0$, since $\phi(0,0)=a\neq 0$. As a consequence, the eigenlines $L_1,L_2$ extend analytically across the $x=0$ axis in $\D(\ep)$ for $\ep>0$ small enough. More specifically, at each point of the form $(0,y)\in \D(\ep)$, theses eigenlines are precisely $x=0$ and $y=0$. This proves Proposition \ref{parabolic} in Case 1.


{\bf Case 2:} \emph{$\eta(x,y)$ has degree $n+3$}, i.e., $m=n+1$. Since $\eta$ is not divisible by $x^{n+2}$, we have that $\eta_{yy}\neq 0$. Using then that $u_1(x,y)=a x^{n+2}+\eta(x,y)+\cdots$ we see that the lowest non-zero Taylor polynomial of $\cH(u_1)$ has degree $2n+1$. Also, note that the lowest non-zero Taylor polynomial of $\Delta u_1$ has degree $n$.

Recall that $J[\Delta u_1,\cH(u_1)]=0$, by \eqref{jauno}. Thus, there exists a non-constant real analytic function $\sigma_1(x,y)$ defined in a neighborhood of $(0,0)$, with $\sigma_1(0,0)=0$, and non-constant real analytic functions of one variable $\alfa_1(t),\beta_1(t)$ with $\alfa_1(0)=\beta_1(0)=0$, such that  \begin{equation}\label{fini}
 \Delta u_1= \alfa_1\circ \sigma_1, \hspace{1cm} \cH( u_1) = \beta_1 \circ \sigma_1.
 \end{equation}
Since, as explained above, $\Delta u_1$ (resp. $\cH(u_1)$) has at the origin a zero of degree $n$ (resp. $2n+1$), and these two integers are coprime, we deduce from \eqref{fini} that $\sigma_1(x,y)$ has a zero of order exactly one at the origin, i.e. $D\sigma_1(0,0)\neq (0,0)$; to see this, note that the vanishing order of $\sigma_1$ at the origin should necessarily be a divisor of both $n$ and $2n+1$, by \eqref{fini}. This also implies by \eqref{fini} that $\alfa_1$ (resp. $\beta_1$) has at the origin a zero of order $n$ (resp. $2n+1$). Thus, by the implicit function theorem and \eqref{fini}, we can choose local coordinates $(s(x,y),t(x,y))$ with $s(x,y):=\sigma_1(x,y)$ and $t(0,0)=0$, such that 
\begin{equation}\label{eqlema}
\Delta u_1 = s^n g_1(s), \hspace{1cm} (\Delta u_1)^2-4\cH(u_1) =s^{2n} g_2(s),
\end{equation}
where $g_2(0)=g_1(0)^2> 0$. 
The second equation in \eqref{eqlema}, together with the fact that $(\Delta u)^2-4\cH(u) =(\Delta u_1)^2-4\cH(u_1) $, shows that for $\ep>0$ small enough, $\D(\ep)\cap \cU_0$ agrees with the $s=0$ curve. So, to prove Proposition \ref{parabolic} in this situation we need to show that $L_1,L_2$ extend analytically across $s=0$, and that one of them is tangent to $s=0$ at the origin.

In order to prove this, let us observe that \eqref{eqlema} implies that the eigenvalues $\mu_1,\mu_2$ of $D^2u_1$ can be written in $\D(\ep)$ as $$\mu_i (s)= s^n \varphi_i(s), \hspace{1cm} i=1,2,$$ with $\varphi_1(0)=0$ and $\varphi_2(0)=g_1(0)\neq 0$ (or viceversa). This implies that $D^2 u_1$ can be written as $s^n A(s,t)$, where $A(s,t)$ is a symmetric $2\times 2$ matrix for each $(s,t)$ close enough to the origin, and such that  $A(0,0)$ is not proportional to the identity.
Consequently, from \eqref{defu1}, $$D^2 u = \landa \, {\rm Id}_2 +s^n A(s,t).$$
Thus, if we denote by $a_{ij}$ to the coefficients of $A$, we see by \eqref{lain} that the eigenlines $L_1,L_2$ are given as the solutions to the equation 
 \begin{equation}\label{lains7}
 s^n\big (-a_{12} (dx^2-dy^2) + (a_{11}-a_{22}) dx dy \big)=0.
 \end{equation}
 Since $A$ is not proportional to the identity at $(0,0)$, the equation 
  \begin{equation}\label{lainss}
 -a_{12} (dx^2-dy^2) + (a_{11}-a_{22}) dx dy =0
 \end{equation}
 defines two real analytic line fields in $\D(\ep)$ for $\ep>0$ small enough, which by \eqref{lains7} agree with $L_1,L_2$ if $s\neq 0$. In other words, $L_1,L_2$ can be analytically extended
across $s=0$ around the origin, as wished. 

Finally, we prove that $L_1$ or $L_2$ is tangent to $s=0$ at the origin. 
From $D^2u_1 =s^nA(s,t)$, we have $(u_1)_{xx}=s^n a_{11}$, $(u_1)_{xy}=s^n a_{12}$ and $(u_1)_{yy}=s^n a_{22}$. Therefore,  
$(s^n a_{11})_y=(s^n a_{12})_x$ and $(s^n a_{12})_y=(s^n a_{22})_x$. If we evaluate these equations at the origin, we obtain $$s_y(0,0) \, a_{11}(0,0)= s_x(0,0) \, a_{12}(0,0), \hspace{0.5cm} s_y(0,0) \, a_{12}(0,0)= s_x(0,0) \, a_{22}(0,0).$$ Or equivalently, by the inverse function theorem, 
 \begin{equation}\label{tagg}
-x_t(0,0) \, a_{11}(0,0)= y_t(0,0) \, a_{12}(0,0), \hspace{0.5cm} -x_t(0,0) \, a_{12}(0,0)= y_t(0,0) \, a_{22}(0,0).
 \end{equation} 
Note that $(x_t(0,0),y_t(0,0))$ is tangent to $s=0$ at the origin. From \eqref{tagg}, we obtain at $(0,0)$
$$-a_{12}(x_t^2-y_t^2)+(a_{11}-a_{22}) x_t y_t = y_t(x_t a_{11} + y_t a_{12})-x_t(x_t a_{12}+y_t a_{22})=0.$$
From \eqref{lainss}, this implies that one of the analytic extensions of $L_1$ and $L_2$ is tangent to $s=0$ at the origin, as wished. 
This proves Proposition \ref{parabolic} in Case 2.


{\bf Case 3:} \emph{$\eta(x,y)$ has degree $>n+3$}, i.e. $m>n+1$. We prove next that this case cannot happen, what together with the previous two cases will prove Proposition \ref{parabolic}.

Similarly to our arguments in the proof of Proposition \ref{prop1}, let us start by noting that we can write $u_1(x,y)=h(x,y)+\psi(x,y)$, where $h(x,y)$ is a polynomial of degree at most $m+1$ that is divisible by $x^{n+2}$, and $\psi:=u_1-h$ has $\eta(x,y)$ as the homogeneous polynomial of lowest degree in its series expansion. So, in our conditions, equation \eqref{corche} holds. Next, consider the following facts:

\begin{enumerate}
\item
$J[\Delta u_1,\cH(u_1)]=0$, by \eqref{jauno}. Thus, the left-hand side of \eqref{corche} vanishes.
 \item
$J[\Delta h, \cH(h)]$ is divisible by $x^{3n+1}$, since $h$ is divisible by $x^{n+2}$.
 \item
The lowest term in the right-hand side of \eqref{corche} not coming from $J[\Delta h, \cH(h)]$ is given by 
 \begin{equation}\label{jod2}
 J[\Delta w,\{w,\eta\}] + J[\Delta\eta ,\cH(w)],
 \end{equation} and has degree $2n+m-2$ (if it is not identically zero).
\end{enumerate}
Moreover, since in our situation $w(x,y)=a x^{n+2}$,  we have that $\cH(w)=0$ and that \eqref{jod2} is a constant multiple of $x^{2n-1}\eta_{yyy}$. Since $\eta$ is not divisible by $x^{n+2}$ by hypothesis, we conclude from this discussion that the homogenous polynomial \eqref{jod2} is not divisible by $x^{3n+1}$ (unless it is identically zero). Thus, adding this information to the three facts above, we conclude by \eqref{corche} that $\eta_{yyy}=0$. Consequently,
$$\eta(x,y)=x^m( a_1 x^2 + a_2 xy + a_3 y^2), \hspace{1cm} a_1,a_2,a_3\in \R.$$ Since $m>n+1$ by hypothesis, we see that $x^{n+2}$ divides $\eta(x,y)$, a contradiction. So, Case 3 cannot happen, as claimed.
 \end{proof}


%

 \section{Proof of Theorem \ref{main}}\label{imdi}
 Following previous notations, let $p_0=(0,0)$ be a point in $\overline{\Omega}$ where $D^2u=\landa {\rm Id}$ for some $\landa \in \R$, and for which situation ${\bf (C.3)}$ holds. Then, by \eqref{defu1} and Proposition \ref{prop1}, we have that
  \begin{equation}\label{eqp1}
  u(x,y)=ax+by+c_0+v(\sqrt{x^2+y^2}),
  \end{equation} 
globally on $\overline{\Omega}$ (by analyticity), where $v=v(r)$ is a real analytic function, and $a,b,c_0$ are real constants. 

If $a=b=0$, then from \eqref{eqp1} we see that $u=u(\varrho)$, i.e. $u$ is a radial function with respect to the origin. From here, it is easy to check from the overdetermined conditions in \eqref{serrineq} that $\Omega$ is a disk, and the result follows.

Assume next that $(a,b)\neq (0,0)$. Up to a rotation in the $(x,y)$-coordinates, we can assume that $b=0$, i.e. that 
   \begin{equation}\label{eqp12}
  u(x,y)=ax+c_0+v(\sqrt{x^2+y^2}).
  \end{equation} 
Moreover, let us observe that if $u$ is a solution to \eqref{serrineq} on $\Omega$, and $t\neq 0$, then the function $u_t(x,y):= u(tx,ty)/t^2$ is a solution to \eqref{serrineq} on $\Omega_t:= t \Omega$, for the boundary constant $c_t:=c/|t|$. Using this transformation, it is clear that we can assume without loss of generality that $a=1$ holds in \eqref{eqp12}, i.e. that
   \begin{equation}\label{eqp2}
  u(x,y)=x+c_0+v(\sqrt{x^2+y^2}).
  \end{equation} 
We will keep denoting by $\Omega$ the corresponding rotated and dilated domain in the plane; that is, $\Omega$ will be the simply connected planar domain for which \eqref{serrineq} holds for $u$ as in \eqref{eqp2}. By \eqref{eqp2}, we have $$|Du|^2= \frac{1}{\varrho^2}\left((\varrho+xv'(\varrho))^2 + y^2 v'(\varrho)^2\right)=1+v'(\varrho)^2 + \frac{2xv'(\varrho)}{\varrho}.$$ 

\noindent Since $u=0$ along $\parc \Omega$, we conclude from \eqref{eqp2} and the above equation that 
 $$|Du|^2= 1+v'(\varrho)^2-\frac{2(c_0+v(\varrho))}{\varrho}v'(\varrho) \hspace{1cm} \text{along $\parc \Omega$.} $$ So, since $|Du|^2=c^2$ along $\parc \Omega$ by the Neumann condition in \eqref{serrineq}, we deduce that $v$ is a solution to the ODE 
  \begin{equation}\label{ode1}
  1+v'(\varrho)^2-\frac{2(c_0+v(\varrho))}{\varrho}v'(\varrho)=c^2.
  \end{equation}
Differentiating \eqref{ode1} we obtain
 \begin{equation}\label{ode2}
 \frac{2(c_0+v(\varrho)-\varrho v'(\varrho))(-v'(\varrho)+\varrho v''(\varrho))}{\varrho^2}=0.
 \end{equation}
So, there are two options. If $c_0+v(\varrho)-\varrho v'(\varrho)=0$, then $v(\varrho)=-c_0 + t\varrho$, with $t\in \R$. Otherwise, we have $-v'(\varrho)+\varrho v''(\varrho)=0$, from where $v(\varrho)=t_1 \varrho^2+t_2$, with $t_1,t_2\in \R$.

In the first case, we have by \eqref{eqp2} that $u(x,y)=x+t\sqrt{x^2+y^2}$, and so its nodal set is contained in the union of two straight lines. Thus, this case is impossible, since $u=0$ on $\parc \Omega$.

In the second case, we have by \eqref{eqp2} that $u(x,y)=c_1 + x + c_2 (x^2+y^2)$ for some $c_1,c_2 \in \R$. Thus, $u(x,y)$ is radial with respect to the point $q_0:=(-1/(2c_2),0)\in \R^2$, and $\Omega$ is a disk centered at $q_0$.

In conclusion, we have proved: \emph{if there exists some point $p_0\in \overline{\Omega}$ such that $D^2 u=\landa {\rm Id}$ for some $\landa \in \R$, and for which case ${\bf (C.3)}$ holds, then Theorem \ref{main} is true.} 

So, to finish the proof, \emph{we assume next that there is no point $p_0\in \overline{\Omega}$ with $p_0\in \cU$ and for which case ${\bf (C.3)}$ holds}, and reach a contradiction. Let $L_1,L_2$ denote, as usual, the line fields given by the eigenlines of $D^2u$. As explained previously, they are well defined and analytic in $\overline{\Omega}\setminus \cU$. Moreover, since by hypothesis there are no points in $\cU$ for which case ${\bf (C.3)}$ happens, and by Proposition \ref{parabolic} the line fields $L_1,L_2$ are well defined and analytic around any point for which case ${\bf (C.2)}$ holds, we deduce that $L_1,L_2$ are well defined and analytic in $\overline{\Omega}\setminus \cU_1$, where $\cU_1:=\{p\in \cU : \text{ case ${\bf (C.1)}$ holds at $p$}\}.$

We next prove that the points in $\cU_1$ are isolated.
Indeed, let $p_0\in \cU_1$, and assume for simplicity that $p_0=(0,0)$. Again, we recall that $u$ can be extended analytically to a neighborhood of the origin, even if $(0,0)\in \parc \Omega$. Following the notations of Section \ref{sec:main}, let $w$ denote the first non-zero homogeneous polynomial of degree $\geq 3$ of the series expansion of $u$ at $(0,0)$. Since $(0,0)\in \cU_1$, $w$ is a homogeneous harmonic polynomial. By the first equation in \eqref{fifi2}, we have
\begin{equation}\label{eqp3}
(\Delta u)^2-4\cH(u)= -4(w_{xx} w_{yy}-w_{xy}^2) + \cdots.
\end{equation} 
By harmonicity of $w$, we have $w_{xx} w_{yy}-w_{xy}^2<0$ in $\R^2\setminus \{(0,0)\}$. Thus, from \eqref{eqp3} we conclude that $(\Delta u)^2-4\cH(u)>0$ in a punctured neighborhood of $(0,0)$. In particular, $(0,0)$ is isolated in $\cU_1$, and the line fields $L_1,L_2$ are well defined around $(0,0)$, with an isolated singularity at the origin. 

This fact together with the boundary conditions imply the following

{\bf Claim:} \emph{$L_1,L_2$ are two analytic line fields with isolated singularities in $\overline\Omega$, the singularities being the points in $\cU_1$. Moreover, each $L_i$, $i=1,2$, is either tangent or normal to $\parc \Omega$ at each $p\in \parc \Omega\setminus \cU_1$}.
\vspace{0.1cm}

\emph{Proof of the Claim:} The fist statement follows directly from the previous discussion. For the second one, let $\gamma(s)=(x(s),y(s))$ be a unit speed parametrization of $\parc \Omega$, and let $\nu(s):=(-y'(s),x(s))$ denote the inner unit normal of $\parc \Omega$. By the overdetermined boundary conditions in \eqref{serrineq}, we have $Du(\gamma(s))=\hat{c} \nu(s)$ for some constant $\hat{c}\in \R$. Differentiating this expression, we obtain that $$D^2 u(\gamma'(s),\nu(s))=0.$$ This implies that the tangent and normal lines to $\parc \Omega$ are eigenlines of $D^2 u$ at every $p\in \parc \Omega$. Thus, each $L_i$ is tangent or normal to $\parc \Omega$ at each $p\in \parc \Omega\setminus \cU$.

Consider finally a point $p\in \parc \Omega$ that lies in $\Sigma:=\{p\in \parc\Omega\cap \cU : p\not\in \cU_1\}.$ Then, case {\bf (C.2)} happens at $p$. By Proposition \ref{parabolic}, we have two possible situations: either $\Sigma$ is a finite set, or $\Sigma=\parc \Omega$. In the first case, again by Proposition \ref{parabolic}, we deduce by continuity that $L_1,L_2$ are well defined at any $p\in\Sigma$, and are tangent or normal to $\parc\Omega$ at that point. Thus, the statement of the Claim follows. In the second case, all points of $\Sigma=\parc \Omega$ are in $\cU$, and by Proposition \ref{parabolic} we deduce that $L_1$ or $L_2$ is globally tangent to $\parc\Omega$. This finishes the proof of the Claim.

\vspace{0.1cm}

In these conditions, it follows from the Claim and a standard application of the Poincaré-Hopf theorem that the sum of all rotation indices of each $L_i$ at the isolated singularities of $\cU_1$ is equal to $1$.

Next, we will compute the rotation index of $L_1,L_2$ at $(0,0)\in \cU_1$. In order to do this, let us first look at the eigenlines $L_1^w, L_2^w$ of $D^2 w$. These eigenlines are given as the solutions to the equation 
 \begin{equation}\label{eqp4}-w_{xy} (dx^2-dy^2)+(w_{xx} - w_{yy})dx dy = 2 {\rm Im} (w_{\zeta \zeta} d\zeta^2)=0,
  \end{equation} where $\zeta:=x+iy$ and $\parc_\zeta:=(\parc_x-i\parc_y)/2$. Since $w=a \zeta^{n+2}$ for $n\geq 1$ by ${\bf (C.1)}$, we see that the eigenlines $L_1^w,L_2^w$ given by \eqref{eqp4} have an isolated singularity at $(0,0)$, and their rotation index at the origin is negative, equal to $-n/2$.
  
 Assume next that $p_0= (0,0)$ is an interior point, i.e. $p_0\in \Omega$. From \eqref{fifii} and the previous arguments, we see then that the eigenlines $L_1,L_2$ of $D^2 u$, given by \eqref{lain}, are arbitrarily well aproximated around the origin by the eigenlines $L_1^w, L_2^w$, given by \eqref{eqp4}. In particular, the rotation index of both line fields $L_1,L_2$ around $(0,0)$ is also negative, and equal to $-n/2$. 
 
 In the case that $(0,0)\in \parc \Omega$, a similar argument shows that the (boundary) index of $L_1$ and $L_2$ at $(0,0)$ coincides with the \emph{half-rotation index} of $L_1^w,L_2^w$ at the origin in a half-plane, which is given by the value $-n/4$.
 
This proves that both $L_1,L_2$ only have isolated singularities of negative index in $\overline\Omega$, what contradicts that the sum of all such indices must be equal to $1$, as explained above. This contradiction proves that there exists a point $p_0\in \cU$ for which ${\bf (C.3)}$ holds. So, by previous arguments, $u$ is radial and $\Omega$ is a disk. This concludes the proof of Theorem \ref{main}.
   


\def\refname{References}

\vskip 0.2cm

\noindent José A. Gálvez

\noindent Departamento de Geometría y Topología,\\ Universidad de Granada (Spain).

\noindent  e-mail: {\tt jagalvez@ugr.es}

\vskip 0.2cm

\noindent Pablo Mira

\noindent Departamento de Matemática Aplicada y Estadística,\\ Universidad Politécnica de Cartagena (Spain).

\noindent  e-mail: {\tt pablo.mira@upct.es}

\vskip 0.4cm

\noindent Research partially supported by MINECO/FEDER Grant no. MTM2016-80313-P and Programa de Apoyo a la Investigacion,
Fundación Séneca-Agencia de Ciencia y Tecnologia
Region de Murcia, reference 19461/PI/14.

\end{document}